\def\cal{\mathcal}
\def\Bbb{\mathbb}
\def\r{\rangle}
\def\l{\langle}
\def\wt{\widetilde}
\def\p{\partial}
\newtheorem{thm}{Theorem}[section]
\newtheorem{prop}[thm]{Proposition}
\newtheorem{lemma}[thm]{Lemma}
\newtheorem{cor}[thm]{Corollary}
\newtheorem{defn}[thm]{Definition}
\newtheorem{rem}[thm]{Remark}
\numberwithin{equation}{section}
\newcommand{\colim@}[2]{%
  \vtop{\m@th\ialign{##\cr
    \hfil$#1\operator@font colim$\hfil\cr
    \noalign{\nointerlineskip\kern1.5\ex@}#2\cr
    \noalign{\nointerlineskip\kern-\ex@}\cr}}%
}
\newcommand{\colim}{%
  \mathop{\mathpalette\colim@{\rightarrowfill@\textstyle}}\nmlimits@
}
\begin{document}
\date{\today}
\title[Orbifold braid groups]{Orbifold braid groups}
\author[S.K. Roushon]{S.K. Roushon}
\address{School of Mathematics\\
Tata Institute\\
Homi Bhabha Road\\
Mumbai 400005, India}
\email{roushon@math.tifr.res.in} 
\urladdr{http://www.math.tifr.res.in/\~\!\!\! roushon/}
\begin{abstract}
  The orbifold braid groups of two dimensional orbifolds  
  were defined in \cite{All02} to understand
  certain Artin groups as subgroups of some suitable orbifold braid groups.
  We studied orbifold braid groups in some more detail in \cite{Rou20} and
  \cite{Rou21}, to prove the Farrell-Jones Isomorphism conjecture for
  orbifold braid groups and as a consequence for some Artin groups.
  In this article we apply the results from \cite{Rou20} and \cite{Rou21},
  to study two aspects of the orbifold braid groups. First we show
  that the homomorphisms induced on the orbifold braid
  groups by the inclusion maps of a generic class of
  sub-orbifolds of an orbifold are injective.
 Then, we prove that the centers of most of the orbifold braid groups are trivial.
\end{abstract}

\keywords{Braid groups, configuration spaces, orbifold braid groups.}

\subjclass[2020]{Primary: Secondary:}
\maketitle


\section{Introduction}
Let $M$ be a connected two dimensional orbifold (see \cite{Sco83}). Consider the following
 hyperplane complement in $M^n$.

$$PB_n(M)=\{(x_1,x_2,\ldots, x_n)\in M^n\ |\ x_i\neq x_j\ \text{for}\ i\neq j\}.$$
\noindent
$PB_n(M)$ is an orbifold  and called the {\it configuration space} of ordered $n$-tuples 
of pairwise distinct points of $M$. There is an obvious action of the
symmetric group $S_n$ on $n$ letters, on
$PB_n(M)$ by permuting the coordinates. The quotient orbifold $PB_n(M)/S_n$ is denoted by
$B_n(M)$. The quotient map $PB_n(M)\to B_n(M)$ is an orbifold covering map with
$S_n$ as the group of orbifold covering transformations. This gives the following
exact sequence of orbifold fundamental groups.

\begin{align}\label{1.1}
  \xymatrix{1\ar[r]&\pi_1^{orb}(PB_n(M))\ar[r]&\pi_1^{orb}(B_n(M))\ar[r]&S_n\ar[r]&1.}\end{align}

\begin{defn}(\cite{All02}){\rm The orbifold fundamental group 
    $\pi_1^{orb}(PB_n(M))$, denoted by ${\cal {PB}}_n(M)$, is called the
    {\it pure orbifold braid group} of $M$ on $n$ strings, and
the orbifold fundamental group $\pi_1^{orb}(B_n(M))$, denoted by ${\cal B}_n(M)$, is called the
{\it orbifold braid group} of $M$ on $n$ strings.}\end{defn}

In the particular case $M={\Bbb R}^2$, classically, the corresponding
groups are called the {\it pure braid groups} and
the {\it braid groups}, respectively. The 
(pure) braid groups were first defined by 
Artin in \cite{Ar47}, and subsequently generalized to the case of
$2$-manifolds by Fox and Neuwirth in \cite{FoN62}. In \cite{All02}, Allcock
made a further generalization and considered two dimensional
orbifolds, as described above, to give a braid type pictorial representation of certain Artin
groups. See \cite{Tit66}, \cite{BS72} and \cite{Bri71} for some background on Artin groups.

The study of the (pure) braid groups is an important subject
in mathematics. An enormous amount of work has been done during the
last several decades, exposing its connections with several areas of
mathematics and physics. For some of the early fundamental works, other than the
ones mentioned above, see
\cite{FN62}, \cite{FB62}, \cite{Bir69}, \cite{Bir73}, \cite{Ga69} and \cite{Gol73}.

Let ${\cal C}'_0$ be the class of all genus zero connected $2$-manifolds with at least one
puncture or one boundary component, and ${\cal C}'_1$ be the class 
of all connected $2$-manifolds of 
genus greater or equal to one. In both the cases if there are infinitely
many punctures, then we assume that they are obtained after removing a
discrete subset. Since we need that in a compact part of the manifold there
should be only finitely many punctures.

Let ${\cal C}_0$ and ${\cal C}_1$ be the
classes of all two dimensional orbifolds with
only cone type singularities, and whose underlying spaces 
belong to ${\cal C}'_0$ and ${\cal C}'_1$, respectively.

The celebrated fibration theorem of Fadell and Neuwirth for manifolds (see
\cite{FN62}) gave an important method  
for better understanding of the configuration spaces of a manifold.
As a consequence of the fibration theorem, for each $S\in {\cal C}'_0\cup {\cal C}'_1$ the
following exact sequence can be deduced.

\begin{align}\label{1.2}
  \xymatrix{1\ar[r]&\pi_1(PB_{n-r}(\wt S))\ar[r]&\pi_1(PB_n(S))\ar[r]&\pi_1(PB_r(S))\ar[r]&1.}\end{align}

\noindent
Above, the homomorphism
$\pi_1(PB_n(S))\to \pi_1(PB_r(S))$ is induced by
the restriction to $PB_n(S)$, of the projection $S^n\to S^r$
to the first $r$ coordinates. And $\wt S=S-\{r-\text{points}\}$, that
is, the fiber over a base point in $PB_r(S)$. Recall that, by the Fadell-Neuwirth fibration
theorem (see \cite{FN62}) the map $PB_n(S)\to PB_r(S)$ is a fibration and an important consequence,
obtained from the long exact sequence of homotopy
groups of this fibration, is that $\pi_k(PB_n(S))=1$, for
all $k\geq 2$. This is the reason the
above long exact sequence took the short form, as in (\ref{1.2}).
But in the case of $M\in {\cal C}_0\cup {\cal C}_1$,
the corresponding statement that $\pi^{orb}_k(PB_n(M))=1$, for
all $k\geq 2$, is not yet known. See the Asphericity conjecture in \cite{Rou21}.  We gave some
examples of orbifolds where this is true (see Proposition 2.4 in
\cite{Rou21}).
Furthermore, in the orbifold category there does not exist any
suitable notion of a 
fibration which would make $PB_n(M)\to PB_r(M)$ a fibration.
Rather, it is expected that $PB_n(M)\to PB_r(M)$ is a kind of quasifibration (see
\cite{Rou20} and the Quasifibration conjecture in \cite{Rou21}). Nevertheless,
in \cite{Rou20} and \cite{Rou21} we proved the existence of the following 
exact sequence of pure orbifold braid groups, analogous to (\ref{1.2}),  
for $M\in {\cal C}_0$ and $M\in {\cal C}_1$, respectively.

\begin{align}\label{1.3}
  \xymatrix{1\ar[r]&{\cal {PB}}_{n-r}(\wt M)\ar[r]&{\cal {PB}}_n(M)\ar[r]&{\cal {PB}}_r(M)\ar[r]&1.}\end{align}
                                                       
\noindent
Here, $\wt M$ is obtained from $M$ by removing $r$ regular points, that is, it is the
fiber over a regular (base) point of $PB_r(M)$.

Together with the work of Allcock (\cite{All02}),
the above exact sequences were required to prove the
Farrell-Jones Isomorphism conjecture for all (pure) orbifold braid
groups, and as a consequence for a class of Artin groups of
finite, complex and affine types (see \cite{Rou20} and
\cite{Rou21}).

For another application of (\ref{1.3}), see \cite{Rou22} or Remark \ref{last}.

In this article, using the exact sequences (\ref{1.1}) and
(\ref{1.3}), we will study some more properties of the (pure)
orbifold braid groups. More specifically, we will prove that,
for  $M\in {\cal C}_0\cup {\cal C}_1$,  
the homomorphisms induced by the inclusion maps of connected sub-orbifolds $N$ of $M$ on their (pure)
orbifold braid groups, are injective whenever $\pi_1^{orb}(N)\to
\pi_1^{orb}(M)$ is injective (Theorem \ref{thm1}).
Furthermore, we will show that the centers of ${\cal {PB}}_n(M)$
and ${\cal {B}}_n(M)$ are 
trivial except for few cases (Theorem \ref{center}). These results and their proofs are motivated by
some analogous results of Paris and Rolfsen (\cite{PR99}) for 
$2$-manifolds. 

\section{Main results}

We start with the following definition of a sub-orbifold of a two
dimensional orbifold. First, recall that in dimension $2$, the
underlying space of an orbifold has a manifold structure (see \cite{Sco83}).

\begin{defn}{\rm 
Let $M\in {\cal C}_0\cup {\cal C}_1$. A {\it sub-orbifold} $N$ of $M$
is a
two dimensional sub-manifold of
the underlying space of $M$ satisfying the following conditions.

$\bullet$ $\overline N-N$ is a $1$-dimensional manifold, that is, $M-N$ does
not contain any isolated point. We call the components of
$\overline N-N$, the {\it boundary components} of $N$ ( or of $\overline N$).

$\bullet$ No boundary component of $\overline N$ contains
any of the cone points of $M$.

$\bullet$ A boundary component of $\overline N$ either lies in the interior of $M$
or is a boundary component of $M$.

$\bullet$ $N$ has the induced orbifold structure from $M$.}\end{defn}

\subsection{Injectivity}
We first identify the sub-orbifolds $N$ of $M$, so that the inclusion $N\subset M$ 
will induce injective homomorphisms on their (pure) orbifold braid groups.
A necessary condition is that on the orbifold fundamental group 
level the inclusion $N\subset M$ induces an injective
homomorphism, since ${\cal {PB}}_1(X)=\pi_1^{orb}(X)$ for any orbifold $X$.
We will see that this condition is also sufficient.

\begin{defn}\label{good} {\rm Let $M\in {\cal C}_0\cup {\cal C}_1$. A sub-orbifold $N$ of
    $M$ is called {\it nice} if
    the components of $\overline {M-N}$ satisfy the following conditions.

    $\bullet$ If a component does not contain any cone point, then it is not
    simply connected.

    $\bullet$ If the underlying space of a component is simply connected, then it 
    contains at least two cone points.}\end{defn}

\begin{rem}\label{injective}{\rm In Proposition \ref{sub}
    we will see that if $\pi_1^{orb}(N)$ is infinite, 
then we need $N$ to be nice, to make
sure that $\pi_1^{orb}(N)\to \pi_1^{orb}(M)$ is injective.
Note that, in Definition \ref{good}, if $\pi^{orb}_1(N)$ is finite, 
then $N$ is either a smooth disc or a disc with a single cone
point in its interior. There is nothing to prove when $N$ is a
smooth disc. In the second case we will prove in
Proposition \ref{sub} that, $\pi^{orb}_1(N)\to \pi_1^{orb}(M)$ is
again injective.}\end{rem}

We make one more relevant remark which we need later on.

\begin{rem}\label{nice-remark}{\rm Let $N$ be a sub-orbifold of $M$
    and $p\in N$ be an interior point. Then,  
    clearly $\wt N:=N-\{p\}$ is a sub-orbifold of $\wt M:=M-\{p\}$.
    Furthermore, since $\overline {\wt M-\wt N}=\overline {M-N}$, if
    $N$ is nice in $M$, then $\wt N$ is also nice in $\wt M$.}\end{rem}

Let $n\leq m$ and $N\subset M$ be a 
connected sub-orbifold. Choose $m$ regular points $s_1,s_2,\ldots,
s_m\in M$ in the interior of $M$, 
such that $s_1,s_2,\ldots, s_n$  lie in the interior of $N$, and $s_{n+1}, s_{n+2},\ldots, s_m$ lie
in the interior of $M-N$.
We consider the orbifold fundamental groups of $PB_m(M)$ ($B_m(M)$) and $PB_n(N)$ ($B_n(N)$) with
respect to the base points $(s_1,s_2,\ldots, s_m)$ and $(s_1,s_2,\ldots, s_n)$, respectively.

\begin{thm}\label{thm1} Let $M\in {\cal C}_0\cup {\cal C}_1$ and
  $N$ be a connected nice sub-orbifold of $M$. Then, the
  following inclusion induced homomorphisms are injective.

  \begin{align}\label{PB}
    \xymatrix{{\cal {PB}}_n(N)\to {\cal {PB}}_m(M).}\end{align}
  
\begin{align}\label{B}
\xymatrix{{\cal B}_n(N)\to {\cal B}_m(M).}\end{align}
\end{thm}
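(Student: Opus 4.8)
The plan is to reduce the general statement to two manageable pieces: first the ``diagonal'' case $n=m$, where both configuration spaces involve the same number of strings but the ambient orbifold shrinks from $M$ to $N$; and second the ``stabilization'' case $N=M$, where we add extra strings $s_{n+1},\dots,s_m$ lying outside a chosen sub-orbifold. The full inclusion $\mathcal{PB}_n(N)\to\mathcal{PB}_m(M)$ factors as $\mathcal{PB}_n(N)\to\mathcal{PB}_n(M)\to\mathcal{PB}_m(M)$ (insert the last $m-n$ points first, then enlarge the orbifold, or vice versa), so it suffices to prove injectivity for each factor. The pure braid statement \eqref{PB} should be done first, and then \eqref{B} will follow by a diagram chase using the exact sequence \eqref{1.1}.

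For the stabilization factor $\mathcal{PB}_n(M)\to\mathcal{PB}_m(M)$, the idea is to iterate one step at a time and use the exact sequence \eqref{1.3}. Adding a single string in the complement of the first $n$ points realizes $\mathcal{PB}_{n+1}(M)$ as an extension with quotient $\mathcal{PB}_n(M)$ wait---that is the wrong direction; rather, by \eqref{1.3} with $r=n$ we get $1\to\mathcal{PB}_1(\wt M)\to\mathcal{PB}_{n+1}(M)\to\mathcal{PB}_n(M)\to1$ where $\wt M=M-\{n\text{ points}\}$, and the map $\mathcal{PB}_n(M)\to\mathcal{PB}_{n+1}(M)$ we want does not come from this sequence directly. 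Instead I would use the \emph{forgetful} map $\mathcal{PB}_{n+1}(M)\to\mathcal{PB}_n(M)$ (forget the last point), which is split precisely because $s_{n+1}$ can be chosen in $M-N$: a section is given by inserting the extra string along a fixed small loop-free path. A split surjection has an injective section, and composing $n$ such sections gives injectivity of $\mathcal{PB}_n(M)\to\mathcal{PB}_m(M)$. Here the hypothesis that $s_{n+1},\dots,s_m\in M-N$ is what guarantees the section is compatible with the first factor.

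For the diagonal factor $\mathcal{PB}_n(N)\to\mathcal{PB}_n(M)$, I would induct on $n$ using \eqref{1.3} for both $N$ and $M$ simultaneously, together with Remark~\ref{nice-remark} which guarantees that niceness is preserved when we pass from $N\subset M$ to $\wt N\subset\wt M$. The base case $n=1$ is exactly the statement $\pi_1^{orb}(N)\to\pi_1^{orb}(M)$ is injective, which is the defining hypothesis of ``nice'' as established in Proposition~\ref{sub} (including the exceptional finite cases noted in Remark~\ref{injective}). For the inductive step, set $r=1$ in \eqref{1.3}: we have a commutative ladder
\begin{align*}
\xymatrix{
1\ar[r]&\mathcal{PB}_{n-1}(\wt N)\ar[r]\ar[d]&\mathcal{PB}_n(N)\ar[r]\ar[d]&\mathcal{PB}_1(N)\ar[r]\ar[d]&1\\
1\ar[r]&\mathcal{PB}_{n-1}(\wt M)\ar[r]&\mathcal{PB}_n(M)\ar[r]&\mathcal{PB}_1(M)\ar[r]&1.
}
\end{align*}
The right vertical arrow is injective by the base case, and the left vertical arrow is injective by the inductive hypothesis applied to the nice pair $\wt N\subset\wt M$. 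A standard diagram chase (five-lemma-type argument for the kernel) then forces the middle arrow to be injective. Finally, for \eqref{B}, I would build the analogous ladder from \eqref{1.1} comparing the $S_n$-extensions for $N$ and for $M$; the outer terms are $\mathcal{PB}_n(N)\hookrightarrow\mathcal{PB}_n(M)$ (just proved) and the identity on $S_n$, so again a diagram chase yields injectivity of $\mathcal{B}_n(N)\to\mathcal{B}_n(M)$, and the stabilization $\mathcal{B}_n(M)\to\mathcal{B}_m(M)$ is handled by the same split-surjection argument.

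The main obstacle I anticipate is justifying the existence and injectivity of the section for the stabilization map in the orbifold setting: unlike the manifold case there is no genuine Fadell--Neuwirth fibration, so one cannot simply quote the splitting of the fibration's homotopy exact sequence. One must instead construct the section geometrically---choosing a contractible chart around $s_{n+1}\in M-N$ disjoint from the other base points and from all cone points, and verifying that the induced map on $\pi_1^{orb}$ is a well-defined splitting of the forgetful homomorphism---and check that this is compatible with the exact sequence \eqref{1.3} so that the kernel term also splits off correctly. A secondary subtlety is the precise handling of the finite exceptional cases (disc with one cone point) in the base case of the induction, but that is already dispatched by Proposition~\ref{sub}.
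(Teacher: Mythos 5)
Your diagonal case ($n=m$) and your treatment of (\ref{B}) are exactly the paper's argument: induction on $n$ using (\ref{1.3}) with $r=1$, base case Proposition \ref{sub}, niceness of $\wt N\subset \wt M$ via Remark \ref{nice-remark}, a diagram chase, and then the ladder comparing the sequences (\ref{1.1}) for $N$ and $M$ with $i\colon S_n\to S_m$. The gap is in your stabilization step. You want to factor ${\cal {PB}}_n(N)\to {\cal {PB}}_m(M)$ through a map ${\cal {PB}}_n(M)\to {\cal {PB}}_m(M)$ built as a section of the forgetful homomorphism, and you yourself flag the construction of that section as the main unresolved obstacle. The difficulty is genuine and twofold: (i) the natural ``insert a constant strand at $s_{n+1}\in M-N$'' recipe is not defined on ${\cal {PB}}_n(M)$ at all, since the strands of a braid in $M$ may sweep across the point $s_{n+1}$; it only yields a homomorphism ${\cal {PB}}_n(\wt M)\to {\cal {PB}}_{n+1}(M)$ with $\wt M=M-\{s_{n+1}\}$, which is not what your factorization needs; and (ii) whether the sequence (\ref{1.3}) splits is a delicate question even for surfaces (for closed surfaces such sections can fail to exist), and nothing available in the orbifold setting--where there is no genuine Fadell--Neuwirth fibration--produces one. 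So as written, the second factor of your factorization does not exist as an inclusion-induced homomorphism, and its injectivity is not established.

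The fix is that no section is needed, and this is where the paper goes the other way around the triangle. The inclusion-induced map ${\cal {PB}}_n(N)\to {\cal {PB}}_m(M)$ is well defined precisely because $s_{n+1},\ldots,s_m$ lie in $M-N$; compose it with the forgetful homomorphism $p_*\colon {\cal {PB}}_m(M)\to {\cal {PB}}_n(M)$ induced by projection onto the first $n$ coordinates (this direction does exist, by (\ref{1.3})). The composite is the diagonal map ${\cal {PB}}_n(N)\to {\cal {PB}}_n(M)$, which you have already shown to be injective, and injectivity of a composite forces injectivity of the first map. Replacing your split-surjection argument by this one-line composition (diagram (\ref{PBProofG}) in the paper) closes the gap; everything else in your proposal then matches the paper's proof.
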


Paris and Rolfsen proved the above theorem in \cite{PR99} for $2$-manifolds 
$M$ and sub-manifolds $N$, assuming that no component of $\overline {M-N}$ is a disc.
See \cite{PR99} for more details.

\subsection{Center} We start with the following definition.

\begin{defn}{\rm A {\it simple surface} $S$ is a connected $2$-manifold whose
fundamental group has a non-trivial center. In
other words, $S$ belongs to the following manifolds or its interiors.

$${\Bbb {RP}}^2, C:={\Bbb S}^1\times I, T:={\Bbb S}^1\times {\Bbb S}^1,$$
$$Mb:={\Bbb S}^1\hat{\times} I (\text{M\"{o}bius band}), K:={\Bbb S}^1\hat{\times} {\Bbb S}^1 (\text{Klein bottle}).$$}\end{defn}

Note that, ${\Bbb {RP}}^2$ does not belong to ${\cal C}_0\cup {\cal C}_1$.

Paris and Rolfsen (\cite{PR99}) proved that the center of the (pure) surface braid groups of
compact large surfaces (see Definition in $\S$ 1.3 in \cite{PR99}) are trivial. We
prove an analogous result for (pure) orbifold braid groups, in the following theorem.

\begin{thm}\label{center} Let $M\in {\cal C}_0\cup {\cal C}_1$ and assume the following.

  $\bullet$ If $M$ has no cone point, then it is not a simple surface.

  $\bullet$ If the underlying space of $M$ is simple, 
  then it has at least one cone point.

  $\bullet$ If the underlying space of $M$ is simply connected (note that
  $M\neq {\Bbb S}^2$), then it has at least two cone points.

  Then, the
  center of ${\cal {PB}}_n(M)$ is trivial. Furthermore, assuming that, either $n=1$ or 
$n\geq 3$, the center of ${\cal B}_n(M)$ is trivial.\end{thm}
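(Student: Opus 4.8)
The plan is to reduce everything to the center of the pure orbifold braid group by means of the exact sequences (\ref{1.1}) and (\ref{1.3}), following the strategy of Paris--Rolfsen. The first step is to compute or bound the center of $\cal{PB}_1(M)=\pi_1^{orb}(M)$. Under the three bulleted hypotheses, the orbifold $M$ is not a simple surface in the orbifold sense: if $M$ has no cone point this is the first hypothesis; if $M$ has cone points, then $\pi_1^{orb}(M)$ contains a free product with torsion or a triangle-type group, and the second and third hypotheses are precisely what is needed to rule out the cases where such a group has non-trivial center (e.g. a disc with one cone point, whose orbifold fundamental group is finite cyclic, or $\mathbb{S}^2$ with few cone points). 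Hence $Z(\pi_1^{orb}(M))=1$, settling the case $n=1$ for both $\cal{PB}_n$ and $\cal{B}_n$.

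For the inductive step on $n$, I would use (\ref{1.3}) with $r=1$:
\begin{align*}
  \xymatrix{1\ar[r]&{\cal {PB}}_{n-1}(\wt M)\ar[r]&{\cal {PB}}_n(M)\ar[r]&\pi_1^{orb}(M)\ar[r]&1,}
\end{align*}
where $\wt M=M-\{p\}$ for a regular point $p$. Removing a regular point from any $M$ satisfying the hypotheses produces an orbifold $\wt M$ that again satisfies them (it gains a puncture, so it stays in $\cal C_0\cup\cal C_1$ and cannot become a simple surface or gain a finite fundamental group); this is the analogue of Remark \ref{nice-remark} for the present hypotheses and should be recorded as a short lemma. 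An element $z$ central in $\cal{PB}_n(M)$ maps to a central element of $\pi_1^{orb}(M)$, hence to $1$, so $z\in\cal{PB}_{n-1}(\wt M)$; moreover $z$ must then be central in $\cal{PB}_{n-1}(\wt M)$ and, by induction, $z=1$ there. The base case $n=1$ is the computation above. To upgrade to $z=1$ in $\cal{PB}_n(M)$ one needs $\cal{PB}_{n-1}(\wt M)\hookrightarrow\cal{PB}_n(M)$, which is exactly the injectivity in (\ref{1.3}); so the induction closes and $Z(\cal{PB}_n(M))=1$ for all $n$.

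For $\cal{B}_n(M)$ with $n=1$ we are done, and for $n\geq 3$ I would use (\ref{1.1}): a central element $z\in\cal{B}_n(M)$ projects to a central element $\sigma\in S_n$, and $Z(S_n)=1$ for $n\geq 3$, so $z\in\cal{PB}_n(M)$. Then $z$ centralizes $\cal{PB}_n(M)$ (since that is normal in $\cal{B}_n(M)$ and $z$ commutes with everything), so by the previous paragraph $z$ is trivial modulo the part of $\cal{PB}_n(M)$ that $z$ could fail to centralize --- but $z\in\cal{PB}_n(M)$ is central in $\cal{PB}_n(M)$, hence $z=1$. The value $n=2$ is genuinely excluded here because $Z(S_2)=S_2$ is non-trivial, so the argument gives no control over the transposition component; this is the reason for the hypothesis $n=1$ or $n\geq 3$.

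The main obstacle I anticipate is not the homological bookkeeping with the exact sequences --- that is formal once injectivity in (\ref{1.3}) is granted --- but the base-case analysis: showing $Z(\pi_1^{orb}(M))=1$ for \emph{every} $M$ allowed by the three hypotheses, including the delicate low-complexity orbifolds (discs or spheres with a small number of cone points, annuli and Möbius bands with one cone point, etc.) where $\pi_1^{orb}(M)$ is a small free product with torsion or a Fuchsian-type group. I would handle this by a case split according to the underlying surface (genus zero with punctures/boundary versus genus $\geq 1$) and the number of cone points, using the standard presentation of $\pi_1^{orb}$ of a $2$-orbifold and the fact that a non-trivial free product, and any non-elementary Fuchsian group, has trivial center; the hypotheses are tailored exactly to discard the finitely many elementary exceptions. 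A secondary, smaller point to verify carefully is that $\wt M$ stays within the hypothesis class under deletion of a regular point, so that the induction is legitimate.
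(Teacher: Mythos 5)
Your proposal follows essentially the same route as the paper: induct on $n$ using the exact sequence (\ref{1.3}) (you take $r=1$ where the paper takes $r=n-1$; both reduce to the center-of-an-extension observation, the paper's Lemma \ref{extension}, together with the fact that deleting regular points preserves the hypotheses, the paper's Corollary \ref{center-cor}), and then treat ${\cal B}_n(M)$ for $n\geq 3$ via (\ref{1.1}) and $Z(S_n)=1$, exactly as the paper does. The base case $n=1$ is the paper's Proposition \ref{center-trivial}; your sketch of it (free products with non-trivial factors, non-elementary Fuchsian-type groups) is a workable alternative to the paper's treatment, which handles the delicate simple-underlying-surface-with-one-cone-point orbifolds by exhibiting them as non-cyclic one-relator groups with torsion and invoking the Karrass--Magnus--Solitar/Newman theorem, and the remaining cases by amalgamated free product splittings.
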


Using the fact that ${\cal B}_n(M)$ is torsion free, for compact 
    large surfaces $M$, it was shown in \cite{PR99} that the center of ${\cal B}_n(M)$ is
    trivial. Since the orbifold braid groups are in general not 
    torsion free, we will use  
    the fact that the symmetric group $S_n$ has trivial center for
    $n\geq 3$, to draw this conclusion. We do not know much about 
    the center of ${\cal B}_2(M)$ for $M\in {\cal C}_0\cup {\cal C}_1$.
    Also Theorem \ref{center} is not true when $M$ is a disc or a disc with one cone point
    of order $q\geq 2$. The center of ${\cal B}_n({\Bbb D}^2)$,
    for $n\geq 2$, is known to be infinite cyclic (see \cite{Cho48}).
    For the disc $M$ with one cone point,  
    ${\cal {PB}}_1(M)={\cal B}_1(M)={\Bbb Z}_q$. Computations of the center of
    ${\cal B}_n(M)$ for the cylinder and for the torus are
    explicitly done in \cite{PR99}.
    
\section{Some basic results}
In this section we prove two propositions which are required to prove Theorems
\ref{thm1} and \ref{center}. This will help us to start a method of induction
 for the proofs.

\subsection{Injectivity for $n=1$}
It is well known that if $N$ is a connected non-simply connected sub-manifold of a
connected $2$-manifold $M$, then the inclusion induced homomorphism
$\pi_1(N)\to \pi_1(M)$ is injective if and only if no component of $M-N$ is
simply connected. The main idea behind the proof is that a boundary component  
of $N$ is $\pi_1$-injective. Therefore, by the Van-Kampen theorem a component of $M-N$ is
simply connected if and only if $\pi_1(N)\to \pi_1(M)$ is not injective.
The same idea can be used to prove the
following proposition.

\begin{prop}\label{sub} Let $M\in {\cal C}_0\cup {\cal C}_1$ and
  $N\subset M$ be
  a nice connected sub-orbifold. Then, the inclusion induced homomorphism
  $\pi_1^{orb}(N)\to \pi_1^{orb}(M)$ is injective.\end{prop}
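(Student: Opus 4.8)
The plan is to imitate the manifold argument sketched just above the statement: decompose $M$ along the interior boundary circles of $N$, recognise $\pi_1^{orb}(M)$ as the fundamental group of the resulting graph of (orbifold) groups, and deduce injectivity of $\pi_1^{orb}(N)\to\pi_1^{orb}(M)$ from Bass--Serre theory. Throughout I would use that for a two dimensional orbifold $X$ with only cone points $p_1,\dots,p_k$ of orders $q_1,\dots,q_k$ one has $\pi_1^{orb}(X)=\pi_1(|X|\setminus\{p_1,\dots,p_k\})/\langle\langle\mu_1^{q_1},\dots,\mu_k^{q_k}\rangle\rangle$, with $\mu_j$ a meridian of $p_j$, and that every $M\in{\cal C}_0\cup{\cal C}_1$ is a \emph{good} orbifold: the only bad two dimensional orbifolds with cone type singularities only have underlying surface ${\Bbb S}^2$, whereas here $|M|\neq{\Bbb S}^2$. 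If $N$ is a smooth disc, or $N=M$, there is nothing to prove, so assume $N\neq M$ is not a smooth disc; then $\overline N$ has at least one boundary circle in the interior of $M$. Let $C_1,\dots,C_\ell$ be these circles and $W_1,\dots,W_t$ the connected components of $\overline{M-N}$, so that $M$ is obtained by gluing $N$ and the $W_i$ along the $C_j$.

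The first and main case is when $N$ is not a disc with at most one cone point. Here I would first record the following lemma: if $X$ is a compact connected two dimensional orbifold with cone points only in the interior and $C$ is a boundary component of $X$, then $\pi_1(C)\cong{\Bbb Z}\to\pi_1^{orb}(X)$ is injective unless $X$ is a disc with no cone point or a disc with exactly one cone point. This follows by choosing a free basis of the free group $\pi_1(|X|\setminus\{\text{cone points}\})$ in which the meridians occur, so that $\pi_1^{orb}(X)\cong F\ast{\Bbb Z}_{q_1}\ast\cdots\ast{\Bbb Z}_{q_k}$, and checking that the class of $C$ has infinite order in this free product outside the two exceptional cases. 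Granting the lemma, the orbifold version of van Kampen's theorem identifies $\pi_1^{orb}(M)$ with the fundamental group of the graph of groups whose vertex groups are $\pi_1^{orb}(N)$ and the $\pi_1^{orb}(W_i)$, whose edge groups are the $\pi_1(C_j)\cong{\Bbb Z}$, and whose edge homomorphisms are the inclusion induced ones. Each edge homomorphism into $\pi_1^{orb}(N)$ is injective by the lemma, since $N$ is not a disc with at most one cone point; and each edge homomorphism into $\pi_1^{orb}(W_i)$ is injective because, $N$ being nice, no $W_i$ is a disc with at most one cone point --- if $W_i$ has no cone point it is not simply connected by Definition \ref{good}, hence not a disc, and if $|W_i|$ is simply connected, hence a disc, it carries at least two cone points. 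Thus we have a genuine graph of groups, and by Bass--Serre theory the vertex group $\pi_1^{orb}(N)$ embeds in $\pi_1^{orb}(M)$.

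The remaining case is when $N$ is a disc with exactly one cone point, of order $q$, so that $\pi_1^{orb}(N)\cong{\Bbb Z}_q$; now the argument above fails because the edge homomorphism ${\Bbb Z}\to{\Bbb Z}_q$ is not injective. For this case I would instead invoke goodness of $M$: the orbifold universal cover $\wt M$ is a simply connected surface on which $\pi_1^{orb}(M)$ acts properly discontinuously, and the stabiliser of a point of $\wt M$ lying over a cone point of order $q$ is cyclic of order exactly $q$, generated by a conjugate of the corresponding meridian. Since the inclusion $N\subset M$ carries a generator of $\pi_1^{orb}(N)\cong{\Bbb Z}_q$ to the meridian of the unique cone point of $N$, which has order $q$ in $\pi_1^{orb}(M)$, the homomorphism $\pi_1^{orb}(N)\to\pi_1^{orb}(M)$ is injective. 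This is the situation referred to in Remark \ref{injective}.

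The main obstacle is precisely this low complexity case: a disc with one cone point does not fit the graph of groups framework, so it has to be isolated and handled separately through the developing map. The only other points requiring care are the boundary circle lemma and the verification that the orbifold van Kampen theorem applies to the decomposition of $M$ along the $C_j$; both become routine once the free product description of two dimensional orbifold groups with non-empty boundary is in hand.
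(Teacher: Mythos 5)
Your proposal is correct and follows essentially the same strategy as the paper: a lemma asserting that boundary circles of the relevant orbifold pieces are $\pi_1^{orb}$-injective (your free product description $F*{\Bbb Z}_{q_1}*\cdots*{\Bbb Z}_{q_k}$ is the same computation as the paper's Lemma \ref{lemmasub}), followed by the orbifold Van-Kampen theorem applied to the splitting of $M$ along the interior boundary circles of $N$, with niceness guaranteeing injectivity of the edge maps into the components of $\overline{M-N}$; your explicit graph-of-groups/Bass--Serre formulation is just a more formal packaging of the paper's Van-Kampen argument. The one genuine divergence is the exceptional case where $N$ is a disc with a single cone point of order $q$: the paper stays within the Van-Kampen framework, observing that $\pi_1^{orb}(M)$ is obtained from $\pi_1^{orb}(\overline{M-N})$ by imposing $\alpha^q=1$ on the boundary class $\alpha$, which is of infinite order in $\pi_1^{orb}(\overline{M-N})$ by Lemma \ref{lemmasub} (this is where niceness of $\overline{M-N}$ enters), and concluding that $\alpha$ has order exactly $q$ in $\pi_1^{orb}(M)$; you instead invoke goodness of $M$ and the orbifold universal cover to see that the local group at the cone point injects. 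Your route has the advantage of sidestepping the slightly delicate claim that imposing the relation $\alpha^q=1$ does not lower the order of $\alpha$ below $q$, at the cost of importing the (standard, but external) facts that all orbifolds in ${\cal C}_0\cup{\cal C}_1$ are good and that local groups of good orbifolds embed; the paper's version keeps the whole proof internal to the combinatorial Van-Kampen picture and uses the niceness hypothesis directly in this case as well. Either way the argument is sound.
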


For the proof of the proposition we will need the following lemma.

\begin{lemma}\label{lemmasub} Let $M\in {\cal C}_0\cup {\cal C}_1$. If the underlying
  space of $M$ is simply connected (that is, if it is a disc), then assume that it has at least
  two cone points. Let $\p$ be a circle boundary component of $M$. Then, 
  $\pi_1(\p)\to \pi_1^{orb}(M)$ is injective, that is $\p$ is
  $\pi_1^{orb}$-injective.\end{lemma}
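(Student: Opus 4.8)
The plan is to reduce the statement to showing that the image $[\p]\in\pi_1^{orb}(M)$ of a generator of $\pi_1(\p)\cong{\Bbb Z}$ has infinite order, which is equivalent to the asserted injectivity. Write $X$ for the underlying surface of $M$ and $x_1,\dots,x_k$ for its cone points, of orders $q_1,\dots,q_k\geq 2$, and recall (see \cite{Sco83}) that $\pi_1^{orb}(M)$ is $\pi_1\big(X\setminus\{x_1,\dots,x_k\}\big)$ modulo the normal closure of the elements $\g_i^{q_i}$, where $\g_i$ denotes a small meridian loop around $x_i$; note that, since cone points are interior, $\p$ avoids all of them. I would then distinguish two cases according to whether or not $X$ is a disc.

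If $X$ is not a disc, I would use the map $M\to X$ forgetting the orbifold structure. It induces a surjection $\rho\colon\pi_1^{orb}(M)\twoheadrightarrow\pi_1(X)$ killing every meridian $\g_i$ (a small loop about an interior point of a surface is null-homotopic in that surface) and carrying $[\p]$ to the class of the boundary circle $\p$ in $\pi_1(X)$. By the classical fact recalled at the beginning of this section, a boundary circle of a surface with non-empty boundary is $\pi_1$-injective unless the surface is a disc; hence $\rho([\p])$ is a non-trivial element of the free, in particular torsion-free, group $\pi_1(X)$, so it has infinite order, and therefore so does $[\p]$ in $\pi_1^{orb}(M)$.

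If $X$ is a disc, then by hypothesis $k\geq 2$. Here $\pi_1\big(X\setminus\{x_1,\dots,x_k\}\big)$ is free on $\g_1,\dots,\g_k$, with the boundary class conjugate to the product $\g_1\g_2\cdots\g_k$, so $\pi_1^{orb}(M)\cong{\Bbb Z}_{q_1}*\cdots*{\Bbb Z}_{q_k}$ and $[\p]$ is conjugate to $\g_1\cdots\g_k$. Collapsing the factors ${\Bbb Z}_{q_3},\dots,{\Bbb Z}_{q_k}$ to $1$ yields a surjection onto ${\Bbb Z}_{q_1}*{\Bbb Z}_{q_2}$ sending $[\p]$ to (a conjugate of) $\g_1\g_2$; since $q_1,q_2\geq 2$, this is a cyclically reduced word of syllable length two in a free product and hence of infinite order, so $[\p]$ again has infinite order in $\pi_1^{orb}(M)$. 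This completes the argument.

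The main obstacle I anticipate is entirely one of bookkeeping: reading off the presentation of $\pi_1^{orb}(M)$ when $M$ simultaneously carries boundary components and cone points, in particular checking that $[\p]$ is exactly the product of the cone meridians in the disc case, and that the classical $\pi_1$-injectivity statement is being applied in a form valid for every member of ${\cal C}'_0\cup{\cal C}'_1$ (orientable or not, and possibly with infinitely many punctures). A more conceptual alternative is to pass to a finite-sheeted manifold covering $S\to M$, which exists because $\pi_1^{orb}(M)$ is virtually free; under the hypotheses $\pi_1^{orb}(M)$ is infinite, so $S$ is not a disc, the collar of $\p$ shows that some power $[\p]^d$ represents a boundary circle of $S$, and $\pi_1$-injectivity of the boundary circles of the non-disc surface $S$ finishes the proof. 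The two-case argument above, however, avoids any appeal to the structure of virtually free groups.
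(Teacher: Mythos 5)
Your proof is correct and follows essentially the same route as the paper: in the disc case you identify $\pi_1^{orb}(M)$ with the free product ${\Bbb Z}_{q_1}*\cdots*{\Bbb Z}_{q_k}$ and observe that the boundary class, a product of the cone meridians with $k\geq 2$, has infinite order, and in the remaining cases you compose with the surjection onto the fundamental group of the underlying surface (the paper's diagram through $\breve M$ and $\check M$) and invoke the classical $\pi_1$-injectivity of boundary circles of non-disc surfaces. The only differences are cosmetic: you justify the infinite-order claim in the disc case by collapsing to a two-factor free product, where the paper simply asserts it.
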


\begin{proof} First, suppose that the underlying space of $M$ is a disc and 
  there are $k$ cone points of orders $q_1,q_2,\ldots, q_k$,
  ($k\geq 2$), in its interior. Then,
  
 $$\pi_1^{orb}(M)\simeq \l \alpha_1\ |\ \alpha_1^{q_1}=1\r* \l
 \alpha_2\ |\ \alpha_2^{q_2}=1\r*\cdots  *\l \alpha_k\ |\
 \alpha_k^{q_k}=1\r.$$

\noindent
Here, for $i=1,2,\ldots, k$, $\alpha_i$ is the loop around
the $i$-th cone point. See Figure 1. Then, the boundary $\p$ represents
the element $\alpha_1\alpha_2\cdots\alpha_k$, which is obviously of
infinite order, since $k\geq 2$. Therefore, $\p$ is $\pi_1^{orb}$-injective.

 \medskip
\centerline{\includegraphics[height=4cm,width=4cm,keepaspectratio]{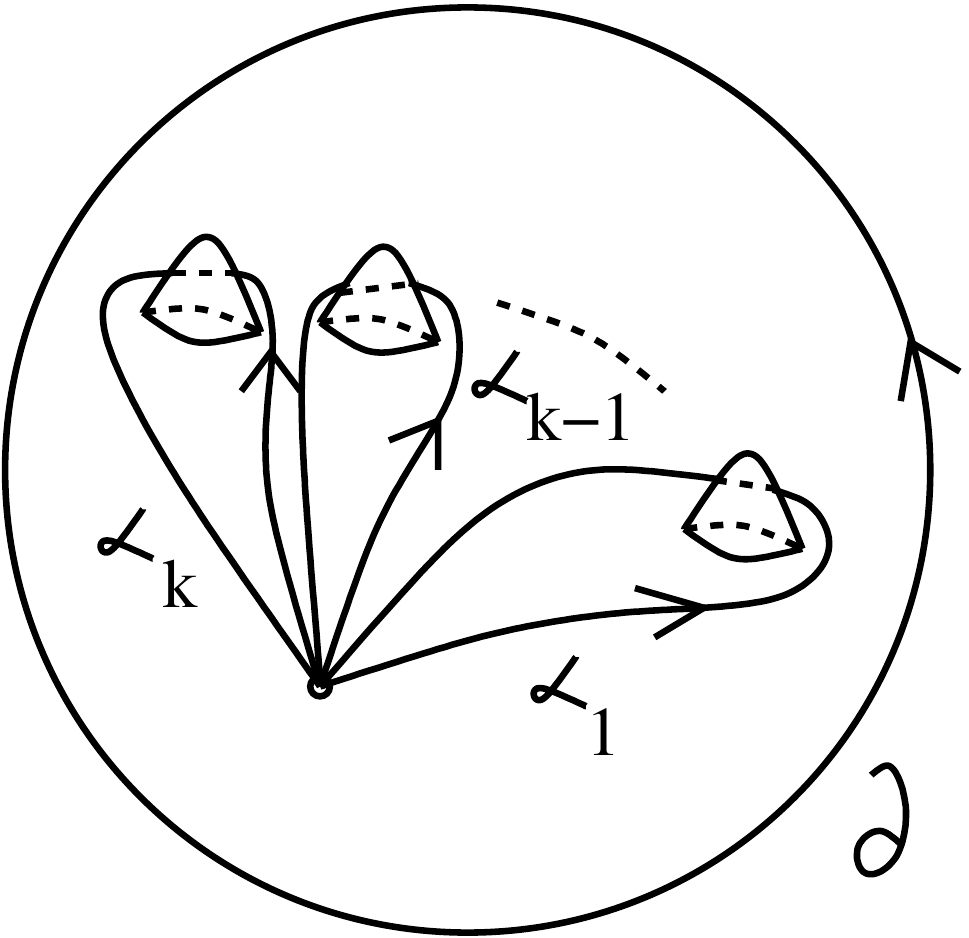}}

\centerline{Figure 1: Disc with $k$ cone points.}

\medskip

Next, assume that the underlying space $\check{M}$ of $M$ is either
of genus zero and non-simply connected or of
 genus $\geq 1$. Then, it is well known that $\pi_1(\p)\to
 \pi_1(\check{M})$ is injective.
 
 \begin{align}\label{p-injective}
   \xymatrix{&\pi_1^{orb}(M)\ar[d]&\pi_1(\breve{M})\ar[l]\ar[dl]\\
   \pi_1(\p)\ar[ur]\ar[r]&\pi_1(\check{M}).&}\end{align}

 Now, we have the diagram (\ref{p-injective}) with commutative
 triangles. Here $\breve{M}$ is equal to $M$ minus the cone
 points. Note that, $\pi_1^{orb}(M)$ is obtained from
 $\pi_1(\breve {M})$, by adding the relations $\alpha^q=1$,
 for each loop $\alpha$ around the puncture of $\breve {M}$, which
 was obtained by removing a cone point of order $q$ (see \cite{Thu91}).
 This describes the top horizontal homomorphism. Clearly, the right hand
 side slanted homomorphism is obtained by adding the
 relation $\alpha=1$ to $\pi_1(\breve {M})$, where
 $\alpha$ is as above. Furthermore, the
 vertical (surjective) homomorphism is obtained by equating the
 elements
 of $\pi_1^{orb}(M)$, which are loops around
 the cone points, to the trivial element. 

Since, the bottom horizontal homomorphism is injective, so is the left
hand side slanted one.

This completes the proof of the lemma.
 \end{proof}

 Now, we come to the proof of Proposition \ref{sub}.
 
 \begin{proof} [Proof of Proposition \ref{sub}]  First,
  we assume that $\pi_1^{orb}(N)$ is infinite. Then, it satisfies the
  hypothesis of Lemma \ref{lemmasub}. Hence, the boundary
components of $N$ are $\pi_1^{orb}$-injective. Furthermore,
again by Lemma \ref{lemmasub}, the boundary of a disc
  with  cone points in its interior is $\pi_1^{orb}$-injective if and
  only if there are at least two cone points in the disc.  The
  rest of the argument follows from the Van-Kampen theorem for
  orbifolds.

  Next, assume that $\pi_1^{orb}(N)$ is finite. Then, clearly $N$
  is either a smooth disc or a disc with one cone point of order $q$ (say).
  If $N$ is a smooth disc, then there is nothing to prove. So
  assume the later and let $\p$ be the boundary circle of $N$.
  Then, by Lemma \ref{lemmasub}, $\p$ is $\pi_1^{orb}$-injective in $\overline {M-N}$.
  Furthermore, $\pi_1^{orb}(M)$ is obtained from
  $\pi_1^{orb}(\overline {M-N})$, by attaching the relation
  $\alpha^q=1$, where $\alpha$ represents $\p$. Since  $\p$ is
  $\pi_1^{orb}$-injective, $\alpha$ has order $q$ in
  $\pi_1^{orb}(M)$. On the other hand
  $\pi_1^{orb}(N)=\l\alpha\ |\ \alpha^q=1\r$. It is now clear that 
$\pi_1^{orb}(N)\to \pi_1^{orb}(M)$ is injective.
\end{proof}

\subsection{Center for $n=1$}
First we state a couple of easy to prove lemmas on the center of an amalgamated free
  product, and the center of an extension of groups. Then, we recall a well-known theorem on
  the center of one-relator groups. Let $Z(-)$ denotes the center of a group. 

  \begin{lemma} \label{amal} If $G$ is an amalgamated free product of
    a non-trivial group with trivial center and an another group, then the center
    of $G$ is trivial.\end{lemma}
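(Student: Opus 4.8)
The plan is to reduce the claim to the single inclusion $Z(G)\subseteq A$, where $A$ denotes the non-trivial factor with $Z(A)=\{1\}$. Granting $Z(G)\subseteq A$, every $z\in Z(G)$ commutes with each element of $A$ and hence lies in $Z(A)=\{1\}$ (recall that the factors of an amalgamated free product embed in it), so $Z(G)=\{1\}$. As usual I read "amalgamated free product" as meaning the amalgamation $G=A*_C B$ is non-trivial, so that $A\setminus C\neq\emptyset$; the remaining degenerate possibility $C=B$ gives $G=A$, for which there is nothing to prove.

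To establish $Z(G)\subseteq A$ I would argue by contradiction. Suppose $z\in Z(G)$ with $z\notin A$. Using the normal form theorem for $A*_C B$, write $z=g_1g_2\cdots g_k$ in reduced form, the $g_i$ lying alternately in $A\setminus C$ and $B\setminus C$ with $k\ge 1$; since $z\notin A$, if $k=1$ then $g_1\in B\setminus C$. Choose $a\in A\setminus C$ and compute the conjugate by concatenating $g_1\cdots g_k\,a\,g_k^{-1}\cdots g_1^{-1}$: the only place reduction can occur is where $a$ abuts $g_k^{\pm1}$. If $g_k$ lies in the $B$-factor, no reduction happens and the word is already reduced, of syllable length $2k+1$. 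If $g_k$ lies in the $A$-factor, the block $g_k\,a\,g_k^{-1}$ fuses into a single $A$-syllable $g_kag_k^{-1}$, and, as long as this syllable is not in $C$, the resulting word is reduced of syllable length $2k-1$ (and here necessarily $k\ge 2$). Since the syllable length of a reduced word is an invariant of the element, in both cases $zaz^{-1}$ has syllable length at least $3$, hence $zaz^{-1}\neq a$, contradicting $z\in Z(G)$. Therefore $z\in A$, and the lemma follows.

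The only step that is not pure bookkeeping with reduced words is the choice of $a$ in the second case: I need $a\in A$ lying outside both the proper subgroup $C\le A$ and the proper subgroup $g_k^{-1}Cg_k\le A$ (proper since $g_k\in A$ and $C\subsetneq A$), and such $a$ exists because a group is never the union of two of its proper subgroups. I expect this coset-avoidance point to be the only genuine obstacle. A more structural alternative is to let $G$ act on its Bass--Serre tree and note that a central element normalizes the vertex group $A$, hence fixes the vertex fixed by $A$ and so lies in $A$; but the normal-form computation above is entirely self-contained and is probably the shortest route here.
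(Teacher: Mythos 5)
Your proof is correct and follows essentially the same route as the paper: both rest on the normal form theorem for amalgamated free products, the paper by quoting the resulting formula $Z(G)=Z(G_1)\cap Z(G_2)\cap H$, you by carrying out the reduced-word computation directly to get the (sufficient) containment $Z(G)\subseteq A$, including the detail the paper leaves implicit (choosing $a\in A$ outside the two proper subgroups $C$ and $g_k^{-1}Cg_k$). The degenerate cases are handled equally implicitly in both arguments, so no further comment is needed.
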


\begin{proof} Let $G=G_1*_HG_2$, then using the normal form of elements of
  a generalized free product, it can be deduced that
  $Z(G)=Z(G_1)\cap Z(G_2)\cap H$. Therefore, if one of $G_1$ and $G_2$
  is non-trivial and has trivial center, then $G$ also has trivial center.\end{proof}

\begin{lemma}\label{extension} Consider an exact sequence of groups.

  \begin{align}\label{center-extension}
    \xymatrix{1\ar[r] & K\ar[r] & G\ar[r]^p&H\ar[r]&1.}\end{align}

  If $Z(K)=Z(H)=\l 1\r$, then $Z(G)=\l 1\r$.\end{lemma}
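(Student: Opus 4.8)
The plan is to run the standard commutator argument for central elements in a group extension, using the two hypotheses $Z(K)=\langle 1\rangle$ and $Z(H)=\langle 1\rangle$ in turn. So let $g\in Z(G)$; I want to show $g=1$. First I would push $g$ down to $H$: since $p$ is a homomorphism and $g$ commutes with every element of $G$, the image $p(g)$ commutes with every element of $p(G)=H$, so $p(g)\in Z(H)=\langle 1\rangle$. Hence $g\in \ker p = K$ (identifying $K$ with its image in $G$).

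Now that $g$ lies in $K$, I would show it is central in $K$, not just in $G$. But this is immediate: $g$ commutes with every element of $G$, and $K\subseteq G$, so in particular $g$ commutes with every element of $K$. Therefore $g\in Z(K)=\langle 1\rangle$, which gives $g=1$. This completes the argument; there is no real obstacle here, as the normality of $K$ and surjectivity of $p$ are exactly what make the two reduction steps go through, and both are part of the hypothesis that \eqref{center-extension} is exact. (One does not even need $K$ normal for the second step — only $K\subseteq G$ — and normality plus surjectivity of $p$ are what legitimize the first step's passage to $H$.)

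The only point worth stating carefully in the write-up is the identification $\ker p = K$ coming from exactness of \eqref{center-extension}, and the fact that centrality of $g$ in $G$ restricts to centrality in the subgroup $K$; both are routine. I expect the entire proof to be two or three sentences, with no step presenting any genuine difficulty.
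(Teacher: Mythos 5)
Your proposal is correct and is essentially identical to the paper's proof: push a central element $g$ down via $p$ (using surjectivity) to land in $Z(H)=\l 1\r$, conclude $g\in K$, then observe that centrality in $G$ restricts to centrality in $K$, so $g\in Z(K)=\l 1\r$. Nothing to change.
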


\begin{proof} Let $g\in Z(G)$, then $p(g)\in Z(H)$ since $p$ is
  surjective. Since $Z(H)=\l 1\r$, $p(g)=1$, and hence, $g\in
  K$. Consequently, $g\in Z(K)$ and hence, $g=1$, since $Z(K)=\l
  1\r$.\end{proof}

\begin{thm}\label{one-relator} (\cite{KMS60}, \cite{N73}) Let $G$ be a non-cyclic one-relator
  group with a non-trivial element of finite order. Then, the center of $G$ is trivial.\end{thm}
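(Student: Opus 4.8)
The plan is to reduce the statement to the structure theory of one-relator groups with torsion and, ultimately, to a single self-centralizing property of the relator root. First I would invoke the Torsion Theorem of Karrass--Magnus--Solitar: writing $G=\langle X\mid R\rangle$ with $R$ cyclically reduced, the presence of a non-trivial element of finite order forces $R=S^m$ for some $m\geq 2$ and some $S$ that is itself not a proper power. The same theorem records the facts I will need downstream, namely that $S$ has order exactly $m$, that every element of finite order in $G$ is conjugate to a power of $S$, and that every finite subgroup of $G$ is conjugate into $\langle S\rangle\cong{\Bbb Z}_m$. Since a finite one-relator group is cyclic, the non-cyclic hypothesis guarantees that $G$ is infinite and hence that $\langle S\rangle$ is a \emph{proper} (finite) subgroup.

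Granting the crux below, the deduction of triviality of the center is then short. Suppose, for contradiction, that $Z(G)\neq\langle 1\rangle$ and pick $z\in Z(G)$ with $z\neq 1$. Being central, $z$ commutes in particular with $S$, so $z\in C_G(S)$, where $C_G(-)$ denotes the centralizer. The crux is that $\langle S\rangle$ is self-centralizing, so $C_G(S)=\langle S\rangle$; hence $z=S^k$ for some $k$ with $0<k<m$, and in particular $z$ has finite order. But now $z=S^k$ is central, which means $C_G(S^k)=G$, whereas the same self-centralizing property gives $C_G(S^k)=\langle S\rangle$, a finite group strictly smaller than the infinite group $G$. This contradiction shows $Z(G)=\langle 1\rangle$, as required.

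The main obstacle, and the step carrying all the real content, is the self-centralizing (indeed malnormal) behaviour of $\langle S\rangle$: I must show that the centralizer of every non-trivial power of $S$ is exactly $\langle S\rangle$, equivalently that distinct conjugates $g\langle S\rangle g^{-1}$ intersect trivially, so that no infinite-order element can commute with a torsion element. This is precisely where Newman's Spelling Theorem enters: it forces one-relator groups with torsion to satisfy a Dehn-type (small cancellation) condition strong enough to control commuting elements and to pin down the centralizers of the torsion conjugates. I would therefore devote the bulk of the argument to extracting this centralizer statement from the spelling theorem together with the Karrass--Magnus--Solitar description of finite subgroups, and treat the passage to $Z(G)=\langle 1\rangle$ as the formal consequence sketched above.
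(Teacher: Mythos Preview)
The paper does not prove this theorem at all: it is stated with attribution to \cite{KMS60} and \cite{N73} and then used as a black box in the proof of Proposition~\ref{center-trivial}. There is therefore no ``paper's own proof'' to compare your proposal against.

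That said, your sketch is a reasonable reconstruction of how the result is assembled from those two sources, and the logical skeleton is sound: the Karrass--Magnus--Solitar Torsion Theorem supplies $R=S^m$ with $m\geq 2$, the description of torsion, and (via non-cyclicity) infiniteness of $G$; the remaining work is to show that the centralizer in $G$ of every non-trivial power of $S$ equals $\langle S\rangle$, after which your contradiction goes through verbatim. One bibliographic remark: the reference \cite{N73} actually cited here is Newman's paper on \emph{soluble subgroups} of one-relator groups with torsion, not the Spelling Theorem paper. The cited result says that every soluble subgroup of such a $G$ is cyclic, and this gives the crux rather more directly than the route you outline: for any non-trivial $S^k$ the subgroup $\langle S\rangle$ is normal in $C_G(S^k)$ with abelian quotient, so $C_G(S^k)$ is soluble, hence cyclic by Newman, hence finite (it contains the torsion element $S^k$), hence contained in a conjugate of $\langle S\rangle$, hence equal to $\langle S\rangle$. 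You may wish to rephrase your ``main obstacle'' paragraph along these lines rather than appealing to the Spelling Theorem and small cancellation.
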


In the following proposition we describe exactly when the center of
the orbifold fundamental group of an orbifold in 
${\cal C}_0\cup {\cal C}_1$ is trivial.

\begin{prop}\label{center-trivial} Let $M\in {\cal C}_0\cup {\cal C}_1$ and assume the
  following.

  $\bullet$ If $M$ has no cone point, then it is not a simple surface.

  $\bullet$ If the underlying space of $M$ is a simple surface, 
  then it has at least one cone point.
  
$\bullet$ If the underlying space of $M$ is simply connected (note that
  $M\neq {\Bbb S}^2$), then it has at least two cone points.
  
  Then, 
  $\pi_1^{orb}(M)$ has trivial center.\end{prop}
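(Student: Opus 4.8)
\medskip

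The plan is to analyze $\pi_1^{orb}(M)$ according to the structure of the underlying surface $\check M$ and the number $k$ of cone points, and in each case exhibit $\pi_1^{orb}(M)$ as an amalgamated free product or group extension to which one of Lemma \ref{amal}, Lemma \ref{extension}, or Theorem \ref{one-relator} applies. First I would dispose of the case $k=0$: then $\pi_1^{orb}(M)=\pi_1(\check M)$, and by hypothesis $\check M$ is not a simple surface, so by the definition of simple surface its fundamental group has trivial center and there is nothing more to do. So from now on assume $k\geq 1$.

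\medskip

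Next I would treat the case where $\check M$ has a puncture or boundary component (equivalently $\pi_1(\check M)$ is free, possibly trivial). Here $\pi_1^{orb}(M)$ is a free product of cyclic groups: $\pi_1^{orb}(M)\simeq F_\ell * {\Bbb Z}_{q_1}*\cdots *{\Bbb Z}_{q_k}$, where $F_\ell=\pi_1(\check M)$ is free of some rank $\ell\geq 0$. If this free product has at least two nontrivial free factors, Lemma \ref{amal} immediately gives trivial center, since a nontrivial free product of two nontrivial groups always contains a nontrivial free factor with trivial center (either a nontrivial free group, or we can regroup the factors). The only way to fail to have two nontrivial factors is $\ell=0$ and $k=1$, i.e. $M$ is a disc with a single cone point; but that case is excluded exactly by the hypotheses (a disc is simply connected, hence must have $\geq 2$ cone points; a cylinder or M\"obius band is simple, and with one cone point $F_1*{\Bbb Z}_q$ again has two factors). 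One subtlety: when $\check M$ is an annulus or M\"obius band with one cone point, $F_1 * {\Bbb Z}_q$ has trivial center because $F_1={\Bbb Z}$ has trivial center — so Lemma \ref{amal} applies with $G_1={\Bbb Z}$.

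\medskip

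The remaining, and hardest, case is when $\check M$ is closed (no punctures, no boundary) of genus $g\geq 1$, with $k\geq 1$ cone points. Then $\pi_1^{orb}(M)$ has the standard one-relator presentation
\[
\l a_1,b_1,\ldots,a_g,b_g,x_1,\ldots,x_k \ \big|\ x_i^{q_i}=1\ (i<k),\ x_1\cdots x_k\,[a_1,b_1]\cdots[a_g,b_g]=1\r
\]
after using the last relation to eliminate $x_k$; this makes it a one-relator group (the single relation $x_1^{q_1}\cdots$ being the image of $x_k^{q_k}$). It is non-cyclic since $g\geq 1$ forces rank $\geq 2$, and it has a nontrivial torsion element (a conjugate of some $x_i$, which one checks has order exactly $q_i$ — this follows from the fact that $x_i$ generates a free factor of the orbifold group of the punctured surface, compare the argument in Lemma \ref{lemmasub}). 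Then Theorem \ref{one-relator} finishes it. If instead $\check M$ is closed of genus $g\geq 1$ but we prefer not to pass through the one-relator presentation, an alternative is to cut $\check M$ along a nonseparating simple closed curve disjoint from the cone points and write $\pi_1^{orb}(M)$ as an HNN extension or amalgam over ${\Bbb Z}$, then apply a center argument; but the one-relator route is cleanest. I expect the main obstacle to be the bookkeeping needed to confirm that the presentation really is a genuine one-relator group (the torsion relations $x_i^{q_i}=1$ must be absorbed, which works precisely because we eliminated $x_k$ and its torsion relation became a consequence) and that the torsion element is genuinely nontrivial of finite order — both of which rest on the structure of $\pi_1^{orb}$ of a once-punctured version of $M$, exactly as exploited in Lemma \ref{lemmasub}.
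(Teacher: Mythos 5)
Your first two cases are fine and essentially agree with the paper: when the underlying surface has a puncture or boundary component, $\pi_1^{orb}(M)$ is a free product of at least two nontrivial (cyclic or free) factors and has trivial center, and when the underlying surface is closed with exactly one cone point you really do get a non-cyclic one-relator group with torsion, so Theorem \ref{one-relator} applies (this is exactly how the paper treats $Tc$ and $Kc$). The genuine gap is the closed case with $k\geq 2$ cone points. Eliminating $x_k$ does not produce a one-relator presentation: the relations $x_i^{q_i}=1$ for $i<k$ survive and are not consequences of the long surface relation, so there is nothing to ``absorb''. Worse, the group is not a one-relator group in any presentation when $k\geq 2$: by the Karrass--Magnus--Solitar theorem, in a one-relator group with torsion every element of finite order is conjugate into a single cyclic subgroup $\langle W\rangle$, whereas a closed $2$-orbifold group with $k\geq 2$ cone points has at least two non-conjugate maximal finite cyclic subgroups (the cone-point stabilizers, of orders $q_1$ and $q_2$). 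So Theorem \ref{one-relator} is simply unavailable there, and your fallback sketch (cut along a nonseparating curve and ``apply a center argument'' to the resulting HNN extension over ${\Bbb Z}$) is not worked out and is not covered by Lemma \ref{amal}, which only treats amalgamated products. A further minor point: your relator is written only for orientable closed surfaces; the non-orientable closed case (relator built from $a_1^2\cdots a_g^2$) has to be included as well.

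To close the gap you need the paper's splitting argument (or a genuine substitute). For a closed underlying surface of genus one (torus or Klein bottle) with $k\geq 2$ cone points, take a disc $D$ in the interior containing all the cone points and split $M$ along $\partial D$; Lemma \ref{lemmasub} makes $\partial D$ $\pi_1^{orb}$-injective on both sides, so Van-Kampen exhibits $\pi_1^{orb}(M)$ as an amalgam over ${\Bbb Z}$ in which the disc-with-cone-points factor is nontrivial with trivial center, and Lemma \ref{amal} finishes. For closed orientable genus $\geq 2$ (resp.\ non-orientable genus $\geq 3$) the paper instead chooses a separating $\pi_1^{orb}$-injective circle so that one side contains all cone points and the other is a smooth subsurface with nonabelian free fundamental group, and again applies Lemma \ref{amal}. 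Some argument of this type (or a proved statement about centers of HNN extensions over ${\Bbb Z}$) is needed before your proof is complete.
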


\begin{proof}
  If $M$ is smooth and not a simple surface, then it is well-known
  that $\pi_1^{orb}(M)$ has trivial center. Since
  the only $2$-manifold $M\in {\cal C}_0\cup {\cal C}_1$ with nontrivial center belongs to the following
  $2$-manifolds and their interiors: the cylinder ($C$), the M\"{o}bius band ($Mb$), the
  torus ($T$) and the Klein bottle ($K$).

  The proof in Case 2 below also
  gives a direct proof of this fact, in the particular case when $M$ is smooth and
  not a simple surface.

We are now ready to complete the proof of the proposition in the case when
$M$ has at least one cone point. We divide the proof in the
  following two cases.

  \noindent
  {\bf Case 1.} Assume that the underlying space of $M$ is a simple surface.
  Hence, it is one
  of the manifolds $C,T,Mb\ \text{or}\ K$ or its interiors.

  First, assume that each of them has at least
  two cone points. Take a disc $D\subset M$ in the interior of
  $M$ which contains the
  cone points in its interior. Then, split $M$ into two pieces along
  the
  boundary of $D$. Consequently, by Lemma
  \ref{lemmasub} and by the
  Van-Kampen theorem, $\pi_1^{orb}(M)$ satisfies
  the hypothesis of Lemma \ref{amal}, and hence has a trivial center.

         \medskip
\centerline{\includegraphics[height=8cm,width=12cm,keepaspectratio]{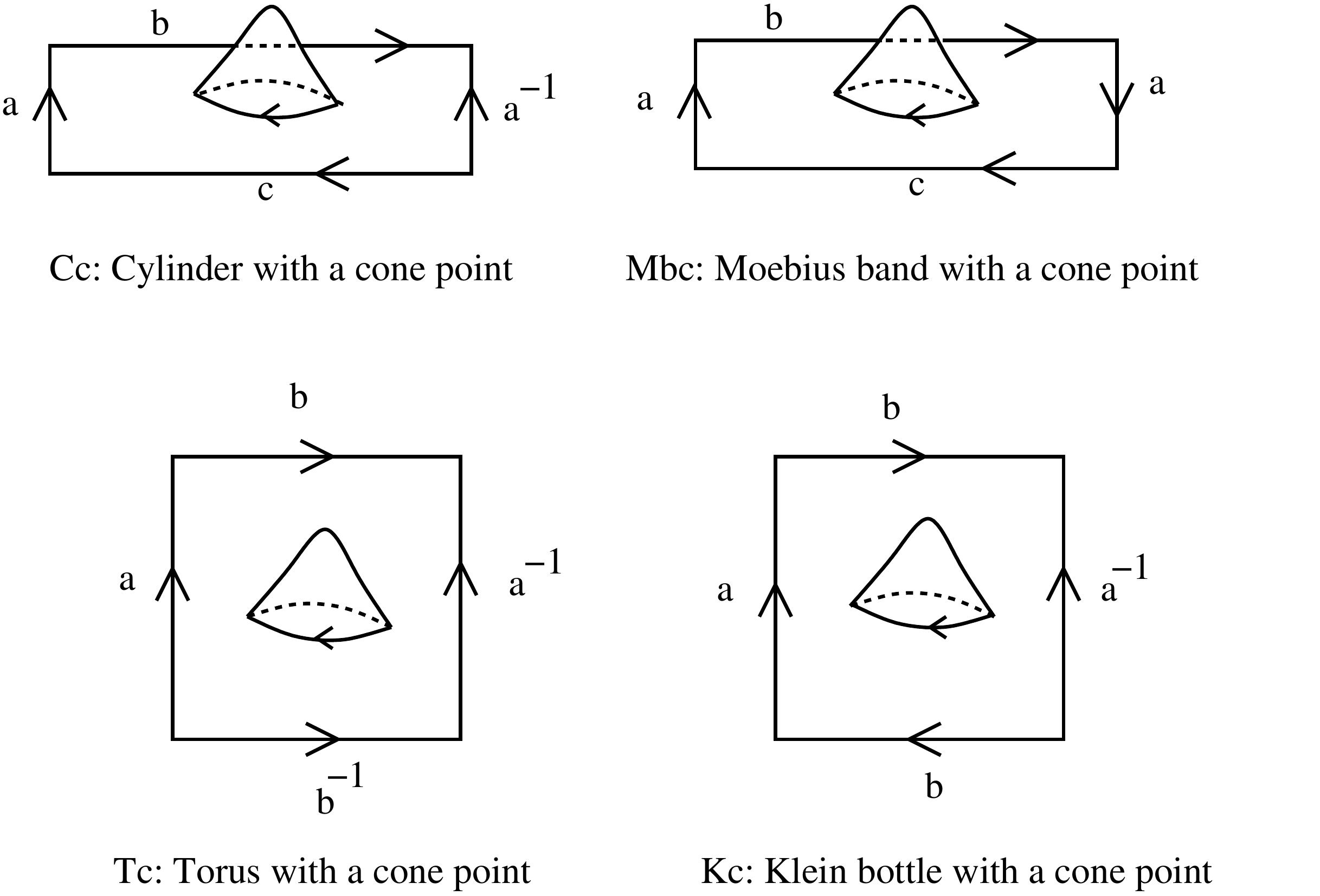}}

\centerline{Figure 2: Orbifolds with one cone point and simple underlying space.}

\medskip
  
  Next, assume that $M$ has only one cone point of order $q\geq 2$.
  We denote these four compact orbifolds by $Cc$, $Tc$, $Mbc$ and $Kc$. Since, the
  orbifold fundamental groups of the non-compact
  cylinder and M\"{o}bius band fall in the corresponding compact cases, we need
  to consider only the above four compact cases.
  We write down their orbifold fundamental groups explicitly as follows.

  $$\pi_1^{orb}(Cc)=\{a,b,c\ |\ (aba^{-1}c)^q=1\}, \pi_1^{orb}(Mbc)=\{a,b,c\ |\ (abac)^q=1\},$$
$$\pi_1^{orb}(Tc)=\{a,b\ |\ (aba^{-1}b^{-1})^q=1\}, \pi_1^{orb}(Kc)=\{a,b\ |\ (aba^{-1}b)^q=1\}.$$

The above calculation can be done using the pictorial presentation in
Figure 2 of
the orbifolds and the Van-Kampen theorem.
 
  It is now easy to
  see that $\pi_1^{orb}(M)$ is a non-cyclic group, by computing the
  abelianization of $\pi_1^{orb}(M)$ from the above presentations.

  Therefore, in all the four cases above the corresponding groups are
  non-cyclic, one-relator and each has an element of finite order.
  Hence, by Theorem \ref{one-relator}, the
  center of $\pi_1^{orb}(M)$ is trivial.

  {\bf Case 2.} Assume that the underlying space of $M$ is not a simple surface. Therefore,
  by hypothesis the underlying space of $M$ is one
  of the following types.

  $\bullet$ It is simply connected (but not ${\Bbb S}^2$) and it has
  at least two cone points.

  $\bullet$ It has genus zero, has at least two punctures (or boundary
  components) 
  in the non-orientable case, and
  has at least three punctures (or boundary components) in the orientable case.
  
  $\bullet$ It has genus
  $\geq 2$ in the orientable case.

  $\bullet$ It has genus $\geq 3$ in the non-orientable
  case.

              \medskip
\centerline{\includegraphics[height=12cm,width=12cm,keepaspectratio]{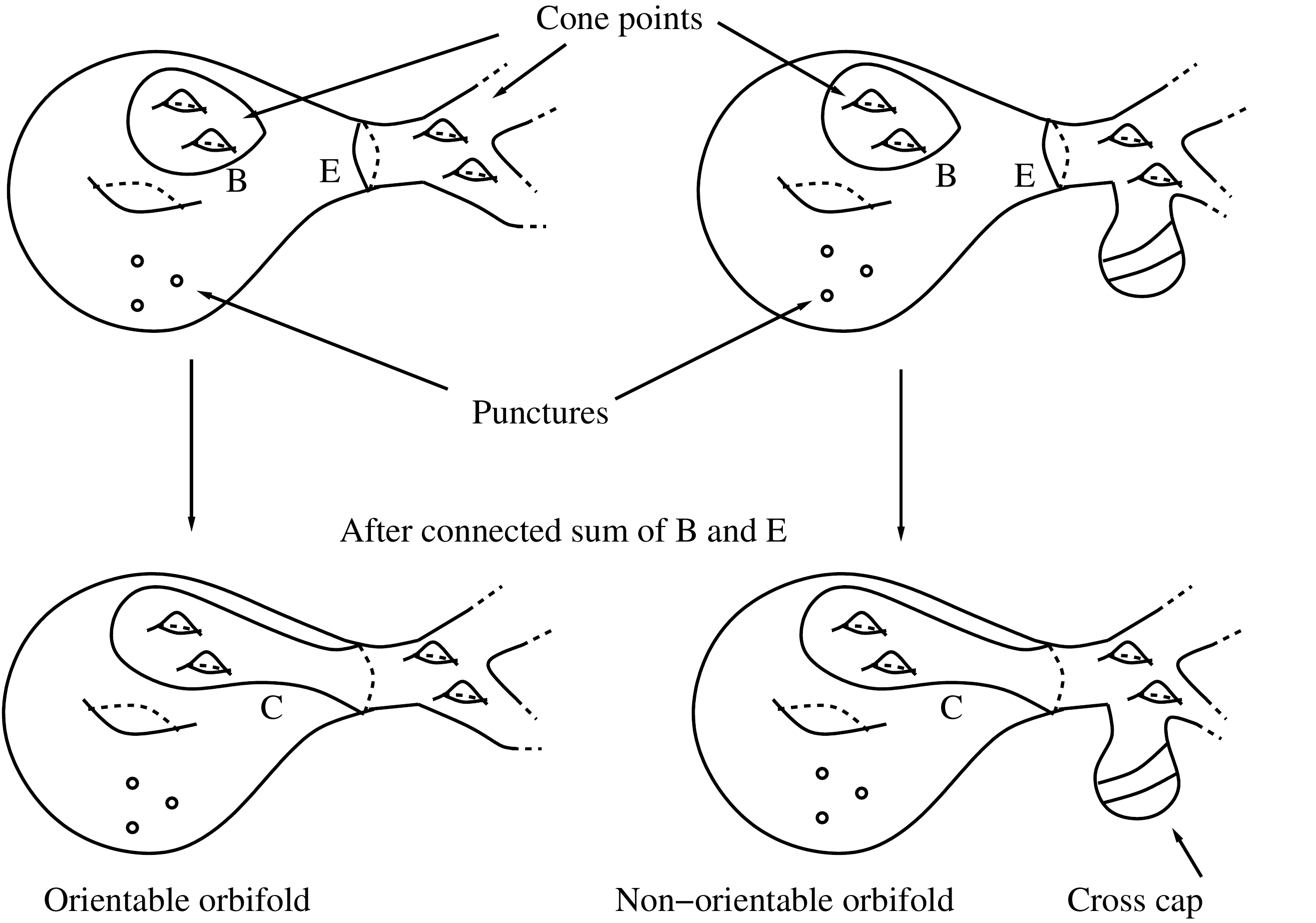}}

\centerline{Figure 3: Orientable and non-orientable orbifolds.}

\medskip

  In the first possibility, clearly, $\pi_1^{orb}(M)$ has trivial center, since
  it is isomorphic to the free product of more than one finite cyclic
  groups (see the proof of lemma \ref{lemmasub}). And
  in the second case, it is clear that $\pi_1^{orb}(M)$ has an amalgamated
  free product structure over an infinite cyclic group, with one of
  the factors non-abelian free.
  Hence, it has trivial center, by Lemma \ref{amal}.

  In the last two cases, 
  $\pi_1^{orb}(M)$ also has an amalgamated free product structure over
  an infinite cyclic group, with one of the factors non-abelian free, and hence has
  trivial center by Lemma \ref{amal}. To prove this, we will split $M$ 
  along a separating $\pi_1^{orb}$-injective embedded circle such
  that one piece contains all the cone points, and 
  the other one is smooth and has non-abelian free fundamental group. This can be done
  by choosing a nice orientable sub-orbifold of genus one with non-empty 
  boundary and which 
  contains no cone point.

  We describe this choice of a nice sub-orbifold in Figure 3.

  Note
  that, in the orientable case it has a torus as a connected sum
  factor, since it has genus $\geq 2$. Next, choose a disc on this torus
  which contains all the cone points lying on the left of the neck of
  the torus as in Figure 3,
  and then take the connected sum $C$ of the boundary $B$ of this disc and a 
  separating closed curve $E$ around the neck. Then, $C$ is the desired
  separating circle, by applying Lemma \ref{lemmasub}.

In the non-orientable case it has genus $\geq 3$, and hence there are
at least three cross caps. We can replace three of these cross caps by
an orientable genus one surface with a single cross cap (see
\cite{D88}).
Then, similar to the orientable case, 
we can choose a separating circle $C$ as shown in Figure 3.

  Then, again we apply the Van-Kampen theorem and Lemma \ref{amal} to see
  that the center of $\pi_1^{orb}(M)$ is trivial.
\end{proof}

An immediate corollary of Proposition \ref{center-trivial} is the following.

\begin{cor}\label{center-cor} Let $M\in {\cal C}_0\cup {\cal C}_1$
  be as in Proposition \ref{center-trivial}, and $\wt M$ is
  obtained from $M$ after removing a finite number of regular
  points. Then, $\wt M$ also satisfies the hypothesis of Proposition
  \ref{center-trivial}, and hence the center of   $\pi_1^{orb}(\wt M)$ is 
  trivial.\end{cor}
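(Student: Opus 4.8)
The plan is to reduce the statement to Proposition \ref{center-trivial} by verifying that each of the three bulleted hypotheses of that proposition is inherited when we pass from $M$ to $\wt M = M - \{k\text{ regular points}\}$. The key observation is that removing regular (non-cone) interior points does not change the set of cone points, and only makes the underlying surface ``larger'' in the sense of fundamental group: if $\check M$ is the underlying space of $M$, then the underlying space of $\wt M$ is $\check M$ with $k$ additional punctures, so there is a surjection $\pi_1(\check M - \{k\text{ points}\}) \to \pi_1(\check M)$ that is far from injective (in fact $\pi_1$ only grows, by a free factor for each new puncture, as long as $\check M$ is not closed).

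I would organize the verification case by case according to the trichotomy in Proposition \ref{center-trivial}. First, if $M$ has no cone point, then neither does $\wt M$, and I must check $\wt M$ is not a simple surface; but removing a point from any of $C, T, Mb, K$ (or their interiors, or $\mathbb{S}^2$) yields a surface with free or otherwise centerless fundamental group — e.g.\ a once-punctured torus has free fundamental group of rank $2$ — so $\wt M$ is never simple, and in fact $M$ itself was assumed not simple so $\check M$ already had genus $\ge 2$ or $\ge 3$ or enough punctures, a condition only reinforced by adding punctures. Second, if the underlying space of $M$ is simple but $M$ has a cone point, then the cone points of $\wt M$ are the same (so there is still at least one), and we are in the ``not a simple surface'' case for $\wt M$ exactly as just discussed — the hypothesis ``at least one cone point'' is trivially preserved. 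Third, if $\check M$ is simply connected (a disc) with at least two cone points, then $\wt M$ has underlying space a disc-minus-points, which is non-simply-connected with at least two cone points — either way the relevant clause (two cone points if simply connected, and the cone-point count is unchanged) holds. In every branch the hypotheses transfer, so Proposition \ref{center-trivial} applies to $\wt M$ and gives $Z(\pi_1^{orb}(\wt M)) = 1$.

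The only mild subtlety — and the step I would be most careful about — is bookkeeping at the boundary between the cases: removing points can move $M$ from one case of Proposition \ref{center-trivial} to another (a disc with two cone points becomes a non-simply-connected orbifold with two cone points; a simple surface becomes a non-simple one), so one must check that the \emph{image} case's hypotheses are met, not just that the original ones were. Since adding punctures never decreases genus, never decreases the number of punctures, and never removes a cone point, and since the excluded closed case $\mathbb{S}^2$ is ruled out by hypothesis on $M$ (and removing a point from $\mathbb{S}^2$ would in any case land in $\mathcal{C}_0$), each transition lands in a case whose hypotheses are automatically satisfied. It then suffices to cite Proposition \ref{center-trivial} for $\wt M$, and by induction on $k$ (removing one point at a time, using Remark \ref{nice-remark}-style stability) the conclusion follows for any finite number of removed regular points.
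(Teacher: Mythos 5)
Your verification is correct and is essentially the paper's (unwritten) argument: the corollary is stated as immediate from Proposition \ref{center-trivial} because removing regular points leaves the cone points untouched and, since the underlying space is never ${\mathbb S}^2$, can never turn an underlying surface satisfying the hypotheses into a simply connected or simple one. The one spot worth tightening is your no-cone-point case: the fact that $M$ cannot be a disc or a plane (the only situations where puncturing would create a simple surface) follows from the third bullet applied to $M$, not merely from the assumption that $M$ is not simple.
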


\section{Proofs of the theorems}
We now prove the two theorems of Section 2.

\begin{proof}[Proof of Theorem \ref{thm1}] Recall that $N$ is a nice
  sub-orbifold of $M\in {\cal C}_0\cup {\cal C}_1$. At first we prove the injectivity
  of (\ref{PB}), that is of ${\cal {PB}}_n(N)\to {\cal {PB}}_m(M)$, when $n=m$. The proof is by induction
  on $n$. Recall that $PB_1(X)=X$ for any orbifold $X$.
  Hence, by Proposition \ref{sub}, ${\cal {PB}}_1(N)\to {\cal {PB}}_1(M)$ is injective.
  Therefore, assume that (\ref{PB}) is true for $n=k-1$ and for any nice sub-orbifold of an
  orbifold from ${\cal C}_0\cup {\cal C}_1$.

  Next, consider the exact sequence
  (\ref{1.3}) for $r=1$. Then, we have the following commutative diagram of exact sequences.

\begin{align}\begin{gathered}\label{PBProof}
  \xymatrix@-.7pc{1\ar[r]&{\cal {PB}}_{k-1}(\wt N)\ar[r]\ar[d]&{\cal
      {PB}}_k(N)\ar[r]\ar[d]&{\cal {PB}}_1(N)\ar[r]\ar[d]&1\\
  1\ar[r]&{\cal {PB}}_{k-1}(\wt M)\ar[r]&{\cal {PB}}_k(M)\ar[r]&{\cal {PB}}_1(M)\ar[r]&1.}\end{gathered}\end{align}

Note that, $\wt N$ is a nice sub-orbifold of $\wt M$, since $\wt N$ is obtained from $N$
after removing one regular point and $\wt M$ is obtained from $M$ after removing the same
regular point (see Remark \ref{nice-remark}).

Therefore, in the commutative diagram (\ref{PBProof}), the first vertical homomorphism is injective
by the induction hypothesis, and the last one is injective from Proposition \ref{sub}. Hence, by a simple
diagram chase it follows that the middle homomorphism is injective as well. This proves
the injectivity of (\ref{PB}) when
$n=m$.

Now, we come to the proof of the injectivity of (\ref{PB}) when $n\leq m$. Consider the
following diagram. Here, $p:PB_m(M)\to PB_n(M)$ is the projection map to the first $n$ coordinates.

\begin{align}\begin{gathered}\label{PBProofG}
    \xymatrix{{\cal {PB}}_n(N)\ar[r]\ar[dr]&{\cal {PB}}_m(M)\ar[d]^{p_*}\\
      &{\cal {PB}}_n(M).}\end{gathered}\end{align}

The slanted homomorphism is injective by the previous case, and hence the top homomorphism is also injective. This
proves the injectivity of (\ref{PB}).

Next, we come to the proof of the injectivity of (\ref{B}), that is of ${\cal B}_n(N)\to {\cal B}_m(M)$. 
Consider the following commutative diagram of the exact sequences (\ref{1.1}) for
$N$ and $M$. Here, $i:S_n\to S_m$ is the inclusion map.

\begin{align}\begin{gathered}\label{BProof}
  \xymatrix{1\ar[r]&{\cal {PB}}_n(N)\ar[r]\ar[d]&{\cal B}_n(N)\ar[r]\ar[d]&S_n\ar[r]\ar[d]^{i}&1\\
  1\ar[r]&{\cal {PB}}_m(M)\ar[r]&{\cal B}_m(M)\ar[r]&S_m\ar[r]&1.}\end{gathered}\end{align}

Once again a simple diagram chase and using the injectivity of (\ref{PB}) we conclude that the
middle vertical homomorphism (that is, (\ref{B})) in the above diagram  is injective.

This completes the proof of Theorem \ref{thm1}.
\end{proof}

Now, we come to the proof of the triviality of the center of the (pure) orbifold braid groups.

\begin{proof}[Proof of Theorem \ref{center}]
  At first we prove the theorem for ${\cal {PB}}_n(M)$, that is, we
  show that ${\cal {PB}}_n(M)$ has trivial center. The proof is by induction
  on $n$. By Proposition \ref{center-trivial}, for $n=1$, ${\cal {PB}}_1(M)=\pi_1^{orb}(M)$ has 
  trivial center. Therefore, we assume that ${\cal {PB}}_{n-1}(M)$ has trivial center.
  
Now, consider the exact sequence (\ref{1.3}) for $r=n-1$.
  
  \begin{align}\label{center-exact}
  \xymatrix@-.5pc{1\ar[r]&{\cal {PB}}_1(\wt M)\ar[r]&{\cal
                                                      {PB}}_n(M)\ar[r]&{\cal
                                                                        {PB}}_{n-1}(M)\ar[r]&1.}\end{align}

Since, by hypothesis, ${\cal {PB}}_{n-1}(M)$ has trivial center and
 by Corollary \ref{center-cor}, ${\cal {PB}}_1(\wt M)$ has trivial
 center, we can apply Lemma \ref{extension} to conclude
 that  ${\cal {PB}}_n(M)$ has trivial center.

Next, we prove that the center of ${\cal B}_n(M)$ is trivial for
$n\geq 3$. Recall the exact sequence (\ref{1.1}).

\begin{align}\label{Bcenter}
  \xymatrix{1\ar[r]&{\cal {PB}}_n(M)\ar[r]&{\cal B}_n(M)\ar[r]&S_n\ar[r]&1.}\end{align}

Note that, $S_n$ has trivial center for $n\geq 3$, and from the
previous case ${\cal {PB}}_n(M)$ has trivial center. Hence, by Lemma
\ref{extension} ${\cal {B}}_n(M)$ has trivial center.

Finally, for
$n=1$, $\pi_1^{orb}(M)={\cal {PB}}_1(M)={\cal {B}}_1(M)$ and hence
by Proposition \ref{center-trivial}, its center is trivial.

This completes the proof of Theorem \ref{center}.
\end{proof}

We conclude with the following remark.

\begin{rem}\label{last}{\rm For an action of a discrete group $G$    
    on a connected $2$-manifold $M$, one considers the {\it orbit
    configuration space} ${\cal O}_n(M,G)$ of $n$ points.
    By definition, ${\cal O}_n(M,G)$ is the space of all
    $n$-tuples of points of $M$ with pairwise
    distinct orbits. The orbit configuration space is an immediate
    generalization of the configuration space, since 
    $PB_n(M)={\cal O}_n(M, \l 1\r )$. During the last couple of
    decades orbit configuration spaces were of much interest and
    many works were done assuming the action is free and properly
    discontinuous, since the Fadell-Neuwirth fibration theorem still holds
    in this case. 

    We now assume that the action is properly discontinuous and effective
    (need not be free), 
    with isolated fixed points. Then, we note here that the Fadell-Neuwirth
    fibration theorem does not hold in this 
    generality of non-free actions (see Lemma 2.3 in \cite{Rou22}).
    Nevertheless, using (\ref{1.3}), the following exact sequence was proved in \cite{Rou22}, assuming that 
    $M\neq {\Bbb S}^2, {\Bbb {RP}}^2$, and that when $M/G$ has genus zero, then
    it either has a puncture or has nonempty boundary.  See
    \cite{Rou21} and \cite{Rou22} for some more on this matter.
    
    \begin{align}
  \xymatrix@-.5pc{1\ar[r]&\pi_1({\cal O}_{n-r}(M_r, G))\ar[r]&
    \pi_1({\cal O}_n(M, G))\ar[r]&
    \pi_1({\cal O}_r(M, G))\ar[r]&1.}\end{align}
\noindent
Here, $M_r$ is the complement in $M$ of the orbits of $r$ points, which
are not fixed points of the action.
    From this exact sequence we can also prove the triviality of the center of
    $\pi_1({\cal O}_n(M,G))$ for most such pairs $(M,G)$, following the
    proof of Theorem \ref{center}.}\end{rem}

    \newpage
\bibliographystyle{plain}

\begin{thebibliography}{60}

\bibitem{All02}
  D. Allcock,
  \newblock Braid pictures for Artin groups,
  \newblock {\em Trans. Amer. Math. Soc.} 354 (2002), no. 9, 3455-3474.

\bibitem{Ar47}
  E. Artin,
\newblock Theory of braids,
\newblock {\em Ann. of Math.} (2) 48 (1947), 101-126.

\bibitem{Bir69}
  J.S. Birman,
  \newblock On braid groups,
  \newblock {\em Comm. Pure Appl. Math.} 22 (1969), 41-72.

\bibitem{Bir73}
  J.S. Birman,
  \newblock Braids, links, and mapping class groups,
  \newblock Annals of Mathematics Studies, No. 82. Princeton University Press, Princeton, N.J.;
  \newblock University of Tokyo Press, Tokyo, 1974.

  \bibitem{Bri71}
    E. Brieskorn,
    \newblock Die Fundamentalgruppe des Raumes der regul\"{a}ren
    Orbits einer endlichen komplexen Spiegelungsgruppe,
    \newblock {\em Invent. Math.} 12 (1971), 57-61.
    
\bibitem{BS72}
  E. Brieskorn and K. Saito,
  \newblock Artin-Gruppen und Coxeter-Gruppen,
  \newblock {\em Invent. Math.} 17 (1972), 245-271.
  
\bibitem{Cho48}
  W. Chow,
\newblock On the algebraical braid group, 
\newblock {\em Ann. of Math.} (2) 49 (1948), 654-658.

\bibitem{D88}
  W. Dyck,
  \newblock Beitr\"{a}ge zur Analysis situs (German),
  \newblock {\em Math. Ann.} 32 (1888), no. 4, 457-512.

\bibitem{FB62}
  E. Fadell and J. Van Buskirk,
  \newblock The braid groups of $E^2$ and $S^2$,
  \newblock {\em Duke math. J.}, 29 (1962), 243-258.
  
\bibitem{FN62}
E. Fadell and L. Neuwirth,
\newblock Configuration spaces,
\newblock  {\em Math. Scand.} 10 (1962), 111-118.

\bibitem{FoN62}
  R.H. Fox and Neuwirth,
  \newblock The braid groups,
  \newblock  {\em Math. Scand.} 10 (1962), 119-126.

\bibitem{Ga69}
F.A. Garside,
\newblock The braid group and other groups,
\newblock {\em Quart. J. Math. Oxford Ser.} (2) 20 (1969), 235-254.

\bibitem{Gol73}
  C.H. Goldberg,
  \newblock  An exact sequence of braid groups,
  \newblock {\em Math. Scand.} 33 (1973), 69-82.
  
\bibitem{KMS60}
 A. Karrass, W. Magnus and D. Solitar,
\newblock Elements of finite order in groups with a single defining relation,
\newblock {\em Comm. Pure Appl. Math.} 13 (1960), 57-66.

\bibitem{N73}
B.B. Newman, 
\newblock The soluble subgroups of a one-relator group with torsion,
Collection of articles dedicated to the memory of Hanna Neumann, III.
\newblock {\em J. Austral. Math. Soc.} 16 (1973), 278-285.

\bibitem{PR99}
  L. Paris and D. Rolfsen,
  \newblock Geometric subgroups of surface braid groups,
  \newblock {\em Ann. Inst. Fourier (Grenoble)} 49 (1999), no. 2, 417-472.
  
\bibitem{Rou20}
S.K. Roushon,
  \newblock Configuration Lie groupoids and orbifold braid groups, 
  \newblock {\em Bull. Sci. Math.} 171 (2021), Paper No. 103028, 35 pp.
  \newblock https://doi.org/10.1016/j.bulsci.2021.103028

\bibitem{Rou21}
  S.K. Roushon,
  \newblock Quasifibrations in configuration Lie groupoids and orbifold braid groups,
  \newblock https://doi.org/10.48550/arXiv.2106.08110

\bibitem{Rou22}
  S.K. Roushon,
  \newblock Almost-quasifibrations and fundamental groups of orbit
  configuration spaces,
  \newblock https://doi.org/10.48550/arXiv.2111.06159

\bibitem{Sco83}
  P. Scott,
  \newblock The geometries of 3-manifolds,
  \newblock {\em Bull. London Math. Soc.} 15 (1983), no. 5, 401-487.

  \bibitem{Thu91}
    W.P. Thurston,
    \newblock Three-dimensional Geometry \& Topology, December 1991 version,
\newblock Mathematical Sciences Research Institute Notes, Berkeley,
California.

\bibitem{Tit66}
  J. Tits,
  \newblock Normalisateurs de tores. I. Groupes de Coxeter \'{e}tendus,
  \newblock {\em J. Algebra} 4 (1966), 96-116.
  
\end{thebibliography}
\ifx\undefined\bysame
\newcommand{\bysame}{\leavevmode\hbox to3em{\hrulefill},}
\fi

\end{document}